\newcommand{\leg}[2]{\genfrac{(}{)}{}{}{#1}{#2}}
\newtheorem*{thma}{Theorem A}
\newtheorem*{thmb}{Theorem B}
\newtheorem{thm}{Theorem}[section]
\newtheorem{lem}[thm]{Lemma}
\theoremstyle{remark}
\newtheorem*{rmk}{Remark}
\newcommand{\R}{\mathcal{R}}
\renewcommand{\P}{{\mathbf{P}}}
\newcommand{\Pp}{\mathcal{P}}
\newcommand{\A}{{\mathbf{A}}}
\newcommand{\Falg}{\mathbf{F}}
\newcommand{\F}{\mathbf{F}}
\newcommand{\Ff}{\mathcal{F}}
\newcommand{\C}{{\mathcal{C}}}
\newcommand{\T}{{\mathcal{T}}}
\newcommand{\Gal}{\mathrm{Gal}}
\title{On quantitative analogues of the Goldbach and twin prime conjectures over $\F_q[t]$}
\author{Andreas O. Bender \\
School of Mathematics\\
Korea Institute for Advanced Study \\Seoul 130-722\\Republic of Korea\\
\texttt{andreas@kias.re.kr}
\and 
Paul Pollack\thanks{The second author is supported by NSF award DMS-0802970.}\\
School of Mathematics\\Institute for Advanced Study \\Einstein Drive\\Princeton, NJ 08540, USA\\
\texttt{pppollac@illinois.edu}
}
\date{September 8th, 2009}
\begin{document}
\maketitle
\begin{abstract} We study the number of ways to decompose a monic $F\in \F_q[t]$ of degree $n$ as a sum of two monic irreducible polynomials in $\F_q[t]$. Our principal result is an asymptotic formula for the number of such representations in the case when $q$ is large compared to $n$. In its range of validity, this formula agrees with what is suggested by heuristic arguments from the rational setting. We also present similar results towards an analogue of the twin prime conjecture.

{\bf Mathematics subject classification (2000):} 11T55, 11N32.
\end{abstract}
\section{Introduction} In this article we consider analogues of the Goldbach and twin prime conjectures in the setting of polynomials over finite fields. Both conjectures then become two-parameter problems, with one parameter the size $q$ of the finite field and the other the degree of the polynomial. We prove that every polynomial of a given degree $n$ has as many representations as a sum of two irreducible polynomials as heuristic arguments lead one to expect, provided that $q$ is odd and large in comparison to $n$. We also show, under similar restrictions on $q$ and $n$, that the number of twin prime pairs of a given degree $n$ is as expected. Our results leave open the seemingly very hard question of what happens over a fixed finite field $\F_q$.

Define a \emph{Goldbach representation} of $F$ to be a way of writing $F$ in the form $F_1 + F_2$, where $F_1$ and $F_2$ are monic irreducible polynomials in $\F_q[t]$ with $\deg{F_1} = \deg{F}-1$ and $\deg{F_2} = \deg{F}$.\footnote{Rather than prescribing that $\deg{F_1} = \deg{F}-1$, one could permit any $F_1$ with $\deg{F_1} < \deg{F}$, which is how the ternary Goldbach problem is handled in \cite{EH91book}. However, in the range of $q$ and $n$ of interest to us, that choice turns out to affect neither the conjectured asymptotic \eqref{eq:rfqapprox} for the number of Goldbach representations nor the statement of our Theorem \ref{thm:goldbach}.} We write $R(F;q)$ for the number of Goldbach representations of $F$.
It is easy to formulate a guess as to how large $R(F;q)$ should be; heuristic arguments which are familiar from the theory of rational primes (see, e.g., \cite[\S1.2.3]{CP05}) suggest that for a monic polynomial $F$ of degree $n$, we should have
\begin{equation}\label{eq:rfqapprox}
 R(F;q) \approx \frac{q^{n-1}}{n(n-1)} \prod_{P \mid F}\left(1- \frac{1}{|P|}\right)^{-1}\prod_{P \nmid F}\left(1-\frac{1}{(|P|-1)^2}\right) \end{equation}
for a wide range of parameters.\footnote{Here we write $|A|$ for $q^{\deg{A}}$, so that $|A| = \#\F_q[T]/(A)$ for each nonzero polynomial $A$.} In fact, it is plausible  to conjecture that the left and right-hand sides in the formula above are asymptotic to one another in any range of the $q$ and $n$ space where $q^n\to\infty$ and $n \geq 3$, uniformly in $F$. (We include the restriction $n\geq 3$ because in characteristic $2$, polynomials of the form $T^2+T+\alpha$ do not have a Goldbach representation.) A similar conjecture was proposed by the second author as Conjecture 7.1.1 in \cite{pollack08thesis}.

It seems very difficult to prove anything in the direction of \eqref{eq:rfqapprox} when working over a fixed finite field $\F_q$. However, the situation improves if we allow $q$ to vary. The following two theorems are due to the first author (see \cite{bender08, bender08A}, and cf. \cite[Chapter 7]{pollack08thesis}):

\begin{thma} Let $\F_q$ be a fixed finite field of odd cardinality $q$, and let $F$ be a monic polynomial in $\F_q[t]$ of degree $n\geq 2$. If $s$ is sufficiently large, then $F$ has a Goldbach representation in $\F_{q^s}[t]$.
\end{thma}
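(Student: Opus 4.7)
The plan is to parametrize putative Goldbach representations of $F$ by their smaller summand. Set
$$H(t;\vec a) = t^{n-1} + a_{n-2}t^{n-2} + \cdots + a_0, \qquad G(t;\vec a) = F(t) - H(t;\vec a),$$
and observe that a Goldbach representation of $F$ in $\F_{q^s}[t]$ is exactly an $\F_{q^s}$-point $\vec a$ of $\mathbf A^{n-1}$ at which both $H(t;\vec a)$ and $G(t;\vec a)$ are irreducible.

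The key ingredient would be a Galois-group computation over $L = \F_q(a_0,\ldots,a_{n-2})$. Since $H$ is the generic monic polynomial of degree $n-1$, the identification $\Gal(H/L)=\symm{n-1}$ is standard. For $G$, the change of variable $a_i \mapsto c_i-a_i$ (where $c_i$ is the $t^i$-coefficient of $F$) plus, when $p\nmid n$, a Tschirnhaus translation $t\mapsto t - c_{n-1}/n$ identifies $G$ with the generic monic polynomial of degree $n$ with vanishing $(n-1)$-st coefficient, whose Galois group is well-known to be $\symm n$. To combine these, I would apply Goursat's lemma: for $n\geq 5$, any subgroup of $\symm{n-1}\times\symm n$ surjecting on both factors is either the full product or the sign-matching index-$2$ subgroup. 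The latter would force a common quadratic subfield of the two splitting fields, namely $L(\sqrt{\mathrm{disc}(H)\cdot\mathrm{disc}(G)})$, and a direct calculation (using that $q$ is odd) shows this product is not a square in $L$, yielding $\Gal(HG/L)=\symm{n-1}\times\symm n$.

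Letting $U\subset\mathbf A^{n-1}$ denote the open locus $\{\mathrm{disc}(HG)\neq 0\}$, the scheme parametrizing roots of $HG$ provides a Galois étale cover of $U$ with group $\symm{n-1}\times\symm n$. Function-field Chebotarev (i.e., Lang--Weil applied to the cover, after if necessary replacing $\F_q$ by a fixed extension so that the geometric and arithmetic Galois groups agree) yields
$$\#\{\vec a\in U(\F_{q^s}): \mathrm{Frob}_{\vec a}\text{ is a pair of full cycles}\} = \frac{q^{s(n-1)}}{n(n-1)}\bigl(1+O(q^{-s/2})\bigr),$$
since the conjugacy class of a pair consisting of an $(n-1)$-cycle and an $n$-cycle has density $1/(n(n-1))$ in the Galois group. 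At each such $\vec a$ both specializations $H(t;\vec a)$ and $G(t;\vec a)$ are irreducible over $\F_{q^s}$, so the count is positive for large $s$ and produces the sought Goldbach representation.

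The principal obstacle is the Galois-group calculation, specifically verifying that $\mathrm{disc}(H)\cdot\mathrm{disc}(G)$ is a non-square in $L$---otherwise the Galois group would be pinned inside the sign-matching subgroup, which contains no pair of full cycles and would kill the main term entirely. A handful of small-degree cases, and the characteristic issue when $p\mid n$ (where the Tschirnhaus substitution fails), would need separate ad hoc treatment.
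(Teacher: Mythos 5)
Your approach is a genuine alternative to the paper's. The paper (following \cite{bender08A}) does \emph{not} parametrize Goldbach summands directly by their coefficients; it works with a family of \emph{bivariate} polynomials $f(x,t)$ and specializes along lines $x = t+b$, so Chebotarev is applied only over a one-dimensional base $\F_q(u)$ after fixing a good member $f$ of the family. You instead parametrize by the coefficient vector $\vec a$ of the smaller summand and apply Lang--Weil over the $(n-1)$-dimensional base. That is simpler conceptually and perfectly adequate for the qualitative Theorem~A, though it would not by itself give the explicit error terms of Theorem~\ref{thm:goldbach}; this is one reason the paper keeps the one-dimensional shift parameter.

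The gap in your argument is the Galois-group step, which you describe too casually. Ruling out the sign-matching subgroup of $\symm{n-1}\times\symm{n}$ amounts to showing that $D_H D_G$ is not a square in $L=\F_q(a_0,\dots,a_{n-2})$, where $D_H,D_G$ are the $t$-discriminants of $H$ and $G=F-H$. This is not a ``direct calculation.'' The clean mechanism is to show that the quadratic subfields $L(\sqrt{D_H})$ and $L(\sqrt{D_G})$ are distinct by comparing ramification, which reduces to showing that the irreducible polynomial $D_H$ does not divide $D_G$ --- equivalently, that a generic $H$ with a repeated root does not force $F-H$ to acquire one. A naive degree-count (e.g., comparing degrees in $a_0$, where one expects $\deg_{a_0} D_H = n-2$, $\deg_{a_0} D_G = n-1$, sum odd) runs into trouble in characteristics dividing $n$ or $n-1$, where the top-degree coefficients of these discriminants vanish and the degrees drop. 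You would also need to invoke or prove irreducibility of the generic discriminant, and settle the cases $n\le 4$ and $p\mid n$ (which you flag but leave open). The paper circumvents all of this by establishing the product Galois group through geometric conditions on the associated plane curves (smoothness, absolute irreducibility, separable Gauss map, genericity of the projections, non-common tangency), each checked by exhibiting a single explicit member of the family; that route is longer but uniformly robust in odd characteristic and scales to the quantitative statements. So your skeleton is sound and genuinely different, but the ``non-square discriminant'' step is where the real work lives, and it needs to be carried out rather than asserted.
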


\begin{thmb} Let $\F_q$ be a finite field of odd cardinality $q$, and let $F$ be a monic polynomial in $\F_q[t]$ of degree $n\geq 2$. If $q > 8(n+6)^{2n^2}$, then $F$ has a Goldbach representation in $\F_q[t]$.
\end{thmb}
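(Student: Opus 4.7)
The plan is to count Goldbach representations geometrically, via an effective Chebotarev density theorem for function fields applied to a suitable variety. Writing $F_1 = t^{n-1} + a_{n-2} t^{n-2} + \cdots + a_0$, the point $a = (a_{n-2}, \ldots, a_0) \in \A^{n-1}(\F_q)$ determines $F_1$, and $F_2 := F - F_1$ is automatically a monic polynomial of degree $n$ in $t$. The pair $(F_1, F_2)$ is a Goldbach representation precisely when both $F_1$ and $F_2$ are irreducible in $\F_q[t]$, and this is equivalent to the geometric Frobenius at $a$ acting on the roots of $F_1 F_2$ as an element of the conjugacy class $C \subset \symm{n-1} \times \symm{n}$ consisting of an $(n-1)$-cycle times an $n$-cycle. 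So the goal is to show the set of such $a$ is nonempty for $q$ large.

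The first substantive step is to compute the Galois group of $F_1 \cdot F_2$, viewed as a polynomial in the variable $t$, over the rational function field $K := \F_q(a_{n-2}, \ldots, a_0)$ and to show it coincides with its geometric counterpart over $\overline{\F_q}(a_{n-2}, \ldots, a_0)$. The claim is that this group is the full product $G = \symm{n-1} \times \symm{n}$. The factor $F_1$ is a generic monic polynomial of degree $n-1$, so its Galois group over $K$ is $\symm{n-1}$. The factor $F_2$ has its $t^{n-1}$-coefficient fixed (equal to $c_{n-1} - 1$, where $c_{n-1}$ denotes the corresponding coefficient of $F$), but after the Tschirnhaus translation $t \mapsto t - (c_{n-1}-1)/n$ (applicable when the characteristic does not divide $n$; the remaining cases are handled by a direct depression argument), $F_2$ becomes a polynomial with algebraically independent lower coefficients, hence has Galois group $\symm{n}$ over $K$. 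Linear disjointness of the two splitting fields is proved by comparing ramification loci on $\A^{n-1}$: the discriminants of $F_1$ and $F_2$ cut out distinct irreducible divisors, inertia at a generic point of either divisor is generated by a single transposition in the corresponding symmetric factor, and any common subextension would have to be compatible with both inertia actions and their ramification patterns, forcing triviality.

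With the Galois group identified, invoke an explicit Chebotarev theorem for function field extensions---derived from the Grothendieck--Lefschetz trace formula together with Deligne's bounds, or from the explicit versions available in the literature of Galois theory over large finite fields---to obtain an estimate of the shape
\[
R(F;q) = \frac{\abs{C}}{\abs{G}} \, q^{n-1} + O\bigl(B(n) \cdot q^{n - 3/2}\bigr) = \frac{q^{n-1}}{n(n-1)} + O\bigl(B(n) \cdot q^{n-3/2}\bigr),
\]
since a standard count gives $\abs{C}/\abs{G} = (n-2)!(n-1)!/((n-1)!\,n!) = 1/(n(n-1))$, matching the leading constant predicted by \eqref{eq:rfqapprox}. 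The quantity $B(n)$ encodes the sum of compactly supported Betti numbers of the associated étale cover of $\A^{n-1}$, which is controlled by a power of $\abs{G} = (n-1)!\, n!$. A careful accounting yields $B(n) \leq (n+6)^{c\, n^2}$ for an absolute constant $c$, so that the main term beats the error precisely in the range $q > 8(n+6)^{2n^2}$ asserted.

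The main obstacle is the Galois-theoretic step: establishing linear disjointness of the splitting fields of $F_1$ and $F_2$ over $K$ despite the linkage $F_1 + F_2 = F$. One must rule out a nontrivial quotient common to $\symm{n-1}$ and $\symm{n}$---essentially the sign character---that could arise from a coincidence of ramification on $\A^{n-1}$. The hypothesis that $q$ is odd enters precisely here, to ensure that the discriminant covers associated with each factor are tamely ramified and that the inertia generators are transpositions in the expected factor, so that the two ramification divisors can be distinguished. Quantitative control of the Betti numbers of the joint étale cover, needed to make the Chebotarev bound explicit, is what drives the very rapid growth rate of the lower bound on $q$ in the statement of Theorem B.
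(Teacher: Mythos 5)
Your strategy differs in a basic geometric respect from the one the paper (following \cite{bender08A}) uses to establish Theorem~B and its quantitative refinement, Theorem~\ref{thm:goldbach}. You parametrize $F_1$ directly by its coefficient vector $a\in\A^{n-1}$ and aim to apply an explicit Chebotarev theorem over the $(n-1)$-dimensional base $\A^{n-1}\setminus\{\operatorname{disc}(F_1)\operatorname{disc}(F_2)=0\}$. The paper instead introduces the much larger family $\Ff$ of bivariate polynomials $f(x,t)$ of total degree $n-1$, where $\#\Ff=q^{I}$ with $I=\binom{n+1}{2}-1$, and for each fixed $f\in\Ff$ studies the one-parameter specialization $b\mapsto f(t+b,t)$. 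Lemma~\ref{lem:ngconst} shows that every monic $g$ of degree $n-1$ is hit equally often by the map $(f,b)\mapsto f(t+b,t)$, so the count of Goldbach representations is recovered by averaging over $\Ff$. The whole point of this apparently wasteful over-parametrization is that each fixed $f$ reduces the problem to Chebotarev over the affine $u$-line, where the error term is governed by the genus of a single curve and where the genus is explicitly computable by Riemann--Hurwitz; this is precisely what Lemma~\ref{lem:cheb} (Fried--Jarden) supplies. Your direct approach collapses the averaging step and the fibration into a single higher-dimensional Chebotarev application, which is conceptually attractive, and your Galois-theoretic identification of $\symm{n-1}\times\symm{n}$ (the generic $F_1$ gives $\symm{n-1}$; $F_2$ is, after an invertible affine substitution on coefficients, a generic polynomial with one coefficient frozen, hence gives $\symm{n}$ over any base field; linear disjointness via distinct irreducible discriminant divisors and purity) is essentially sound, modulo the small-$n$ cases where $\symm{3}$ or $\symm{4}$ admit quotients other than $\Z/2$ and the wildly ramified subcase $p\mid n!$, neither of which you address.

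The genuine gap is the Betti-number bound. Theorem~B has an explicit numerical threshold $q>8(n+6)^{2n^2}$, and your argument hinges on the assertion that the sum of compactly supported Betti numbers of the degree-$(n-1)!\,n!$ \'etale cover of the $(n-1)$-dimensional base satisfies $B(n)\leq(n+6)^{cn^2}$. You give no derivation of this, and it is not at all clear it holds: the cover has dimension $n-1$, degree $(n-1)!\,n!$, and ramifies along hypersurfaces of degree polynomial in $n$, and the available explicit estimates (Bombieri, Katz, Adolphson--Sperber) bound total Betti numbers by quantities like $C^{\dim}\cdot(\text{degree data})^{\dim}$, which grow exponentially in the dimension $n-1$ on top of everything else. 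Making such a bound explicit enough to produce the stated threshold on $q$ is exactly the technical obstruction that the paper's reduction to curves circumvents: there the relevant invariant is a single curve genus, computed explicitly as $g=1+N(n^2-2n)$ with $N=(n-1)!\,n!$, and the exceptional set $\Ff\setminus\Ff'$ is bounded by a direct count giving the $n\cdot2^{\binom{n+2}{2}}q^{I-1}$ term. As written, your outline replaces that concrete, one-variable bookkeeping with an appeal to a higher-dimensional estimate that is neither cited nor proved, so the quantitative conclusion $q>8(n+6)^{2n^2}$ does not actually follow from what you have written.
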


Our first result is that if $q$ is large compared to $n$, then $R(F;q)$ is precisely as large as predicted by \eqref{eq:rfqapprox}.

\begin{thm}\label{thm:goldbach} Let $\F_q$ be a finite field of odd order $q$. Suppose that $F \in \F_q[t]$ is a monic polynomial of degree $n \geq 2$. Then \[ R(F;q) = \frac{q^{n-1}}{n(n-1)} + O(n! (n-1)! q^{n-3/2}) + O(n \cdot 2^{\binom{n+2}{2}} q^{n-2}),\]
where the implied constants are absolute.
\end{thm}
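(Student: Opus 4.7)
The plan is to recast $R(F;q)$ as a point count on an étale Galois cover of an affine space, and then apply the function-field Chebotarev density theorem with explicit error terms. Since a Goldbach representation is determined by its degree-$(n-1)$ summand $F_1$ — because $F_2 = F - F_1$ is then forced and automatically monic of degree exactly $n$ — $R(F;q)$ equals the number of monic $F_1 \in \F_q[t]$ of degree $n-1$ for which both $F_1$ and $F - F_1$ are irreducible. I would parametrize such $F_1$ by the affine space $\A^{n-1} = \Spec \F_q[a_0, \ldots, a_{n-2}]$.

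Over the open subscheme $U \subset \A^{n-1}$ where the discriminant of $F_1(F-F_1)$ does not vanish, I would introduce the finite étale Galois cover $\pi : V \to U$ whose fiber over $F_1$ consists of orderings $((\alpha_i),(\beta_j))$ of the roots of $F_1$ and of $F - F_1$ respectively, with deck group $\symm{n-1} \times \symm{n}$ acting by independently permuting the two sets of roots. For $F_1 \in U(\F_q)$, the polynomials $F_1$ and $F - F_1$ are simultaneously irreducible exactly when Frobenius at $F_1$ projects into the unique conjugacy class consisting of pairs $(\sigma, \tau)$ with $\sigma$ an $(n-1)$-cycle and $\tau$ an $n$-cycle, a class of relative size $\frac{1}{n(n-1)}$. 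The crucial algebraic input is the maximality of the generic Galois group of $\pi$, namely that it equals all of $\symm{n-1} \times \symm{n}$. I would establish this in two stages: first, a generic-polynomial argument (after the invertible affine change of coefficients $b_i = f_i - a_i$ on the second factor, modulo a small adjustment when $p \mid n$) shows that each of $F_1$ and $F - F_1$ has full symmetric group over $K = \F_q(a_0, \ldots, a_{n-2})$; second, linear disjointness of the two splitting fields reduces, for $n \geq 5$, to verifying that the discriminants of $F_1$ and of $F - F_1$ are independent modulo squares in $K^\times$, which can be checked by specialization, with the small $n$ cases handled by hand.

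With maximality in hand, an explicit form of the function-field Chebotarev theorem — concretely, Deligne's Riemann hypothesis applied to the Lefschetz trace formula on the étale cohomology of $V$ — yields
\[ \#\{F_1 \in U(\F_q) : F_1 \text{ and } F - F_1 \text{ both irreducible}\} = \frac{q^{n-1}}{n(n-1)} + O\bigl(n!\,(n-1)!\,q^{n-3/2}\bigr),\]
the factor $n!\,(n-1)!$ arising as a crude bound on the character sum over $\symm{n-1} \times \symm{n}$. The second error term $n \cdot 2^{\binom{n+2}{2}} q^{n-2}$ is then the correction from the discriminant locus $(\A^{n-1} \setminus U)(\F_q)$, bounded by a Schwartz--Zippel or Lang--Weil estimate after controlling the degree and topological complexity of the hypersurface cut out by the discriminant of $F_1(F-F_1)$ in $a_0, \ldots, a_{n-2}$. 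The principal obstacle I anticipate is the Galois group maximality step, which requires genuine care — particularly in small characteristic or for degenerate $F$ — whereas the remaining steps are essentially careful bookkeeping once the geometric setup is in place.
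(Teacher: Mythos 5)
Your proposal is a genuine alternative to the paper's, and the gap lies in the error‐term estimation. The paper never performs Chebotarev over the full $(n-1)$-dimensional coefficient space. Instead it introduces the auxiliary family $\Ff$ of bivariate polynomials $f(x,t)$ of total degree $n-1$ with $f(t+b,t)$ monic for all $b$ (so $\#\Ff = q^{I}$ with $I = \binom{n+1}{2}-1$), shows by a direct counting lemma that every monic $g$ of degree $n-1$ arises as $f(t+b,t)$ with the \emph{constant} multiplicity $N_g = q^{I-n+2}$, and thereby rewrites $q^{I-n+2}R(F;q)$ as a double sum over $f\in\Ff$ and $b\in\F_q$. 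For each ``good'' $f$ — and the number of exceptions in $\Ff$ is $\ll n\cdot 2^{\binom{n+2}{2}} q^{I-1}$, which is the true source of the secondary error term, not the discriminant locus — the inner sum over $b$ is a count of degree-one primes of $\F_q(u)$, so the explicit one-variable Chebotarev theorem of Fried--Jarden applies, with the genus of the compositum $L$ computed explicitly as $g = 1 + N(n^2-2n)$, where $N=n!(n-1)!=[L:\F_q(u)]$. This produces the per-$f$ error $O(Nq^{1/2})$, and summing over $\Ff$ and dividing by $q^{I-n+2}$ gives the stated bounds.

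The concrete gap in your route is the step where you claim that Deligne's bound on $\A^{n-1}$ yields an error $O(n!\,(n-1)!\,q^{n-3/2})$ with the constant ``arising as a crude bound on the character sum over $\symm{n-1}\times\symm{n}$.'' Over a higher-dimensional base, Deligne gives the $q^{1/2}$ saving per Frobenius eigenvalue, but the implied constant is governed by the dimensions of the compactly supported $\ell$-adic cohomology of the relevant twisted sheaves on $U\subset\A^{n-1}$ (equivalently, the degree of the associated $L$-functions), not merely by $\#\C/\#G$ or the cover degree $N$. Expanding the indicator of $\C$ in irreducible characters of $\symm{n-1}\times\symm{n}$ and applying Deligne termwise requires a bound on the Euler characteristic of each isotypic sheaf, and there is no a priori reason for the total to be $\ll N$. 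In the one-variable setting this is exactly the role played by the explicit genus formula; no analogous clean bound exists over $\A^{n-1}$, and supplying one (say via conductor or Betti-number estimates in the style of Katz) is a substantive missing piece. Incidentally, your attribution of the second error term to the discriminant hypersurface is off by an exponential factor: that locus has degree polynomial in $n$ and contributes only $O(\mathrm{poly}(n)\,q^{n-2})$ by Schwartz--Zippel, whereas the paper's $O(n\cdot 2^{\binom{n+2}{2}} q^{n-2})$ tracks the exceptional members of the large auxiliary family $\Ff$, a set with no counterpart in your direct parametrization.
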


After a short computation, one obtains from Theorem \ref{thm:goldbach} that for odd $q$,
\begin{equation}\label{eq:rfqasymptotic} R(F;q) \sim \frac{q^{n-1}}{n(n-1)} \quad\text{whenever} \quad \frac{q}{n^3 2^{\binom{n+2}{2}}} \to \infty.\end{equation}
It may not be immediately obvious that this asymptotic agrees with our prediction \eqref{eq:rfqapprox}, since the two products in \eqref{eq:rfqapprox} are absent in \eqref{eq:rfqasymptotic}, but in our range of $q$ and $n$, both of these products are easily verified to be tending towards $1$.

We can prove similar results towards a polynomial version of the twin prime conjecture. Let $A$ be a polynomial in $\F_q[t]$. Barring congruence obstructions, one expects that there are infinitely many monic polynomials $F \in \F_q[t]$ for which both $F$ and $F+A$ are irreducible. If $A$ is a constant polynomial, this can be proved by a method of C. Hall (see \cite[Theorem 1.2]{pollack08}), but it seems very difficult to confirm this prediction for other values of $A$.

If $A \in \F_q[t]$ and $\deg{A} < n$, we write $\pi_2(n;A,q)$ for the number of monic polynomials $F$ of degree $n$ for which both $F$ and $F+A$ are irreducible. Here heuristics suggest (cf. \cite[Conjecture 1]{pollack08T}) that
\begin{equation}\label{eq:twinheuristic} \pi_2(n;A,q) \approx \frac{q^n}{n^2} \prod_{P \mid A}\left(1-\frac{1}{|P|}\right)^{-1} \prod_{P \nmid A}\left(1 - \frac{1}{(|P|-1)^2}\right).\end{equation}
We prove the following two results:

\begin{thm}\label{thm:twin} Let $\F_q$ be a finite field of odd cardinality $q$. Let $n$ be a natural number, and let $A$ be a nonzero polynomial in $\F_q[t]$ of degree smaller than $n$. If $s$ is sufficiently large, then one can always find a monic polynomial $P\in \F_{q^s}[t]$ of degree $n$ for which both $P$ and $P+A$ are irreducible over $\F_{q^s}$.
\end{thm}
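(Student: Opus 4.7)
The plan is to apply the function-field Chebotarev density theorem to an étale cover of the parameter space of monic polynomials. Let $U \subset \A^n_{\F_q}$ be the open subvariety whose points $c = (c_0, \ldots, c_{n-1})$ correspond to monic polynomials $P_c(t) = t^n + c_{n-1} t^{n-1} + \cdots + c_0$ for which $P_c$ and $P_c + A$ are separable and coprime (both open conditions, as $A$ is fixed and nonzero). Over $U$, introduce the degree-$n$ étale covers $Y_1, Y_2 \to U$ whose fibers at $c$ consist of the roots of $P_c$ and of $P_c + A$, respectively, and let $Z \to U$ be the Galois closure of $Y_1 \times_U Y_2$. Write $G^{\mathrm{geom}}$ for the geometric monodromy group of this cover; it embeds in $S_n \times S_n$.

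The crux is to show $G^{\mathrm{geom}} = S_n \times S_n$. Pass to the generic point: set $K = \overline{\F_q}(U)$ and let $L_1, L_2$ be the splitting fields of $P_c$ and $P_c + A$ over $K$. That $\Gal(L_1/K) = S_n$ is the classical theorem on the generic polynomial; the same holds for $L_2$ because the substitution $c \mapsto c'$ determined by $P_c + A = t^n + c'_{n-1} t^{n-1} + \cdots + c'_0$ is an automorphism of $\A^n$ taking $P_c + A$ to the generic monic polynomial of degree $n$. Linear disjointness $L_1 \cap L_2 = K$ would then give $\Gal(L_1 L_2 / K) = S_n \times S_n$. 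For $n \geq 5$, the simplicity of $A_n$ reduces linear disjointness to excluding two possibilities: the coincidence $L_1 = L_2$, and the quadratic intersection $L_1 \cap L_2 = K(\sqrt{\mathrm{disc}(P_c)}) = K(\sqrt{\mathrm{disc}(P_c + A)})$. The first is defeated by specializing $c$ to values over $\overline{\F_q}$ for which $P_c$ is irreducible while $P_c + A$ factors — such specializations exist because $A \ne 0$. The second reduces to showing that $\mathrm{disc}(P_c) \cdot \mathrm{disc}(P_c + A)$ is not a square in $K$, which is verified on a convenient specialization. The cases $n \le 4$ admit a few extra normal-subgroup accidents (notably $V_4 \subset S_4$) that are dispatched by an entirely analogous case analysis.

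Granted the monodromy identification, the function-field form of Chebotarev (via Lang--Weil together with Deligne's bounds) yields, for any fixed conjugacy class $C \subset G^{\mathrm{geom}}$,
\[
\#\{c \in U(\F_{q^s}) : \mathrm{Frob}_c \sim C\} = \frac{|C|}{|G^{\mathrm{geom}}|}\, q^{sn} + O\bigl(q^{s(n - 1/2)}\bigr),
\]
for $s$ sufficiently large, the implied constant depending on $n$, $q$, and $A$ but not on $s$. Taking $C$ to be the class of $(\sigma, \sigma')$ with $\sigma, \sigma'$ being $n$-cycles in the two copies of $S_n$, every such $c$ gives a polynomial $P_c$ on which Frobenius acts as an $n$-cycle on the roots of both $P_c$ and $P_c + A$; equivalently, both are irreducible over $\F_{q^s}$. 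Since the leading coefficient above is strictly positive, for $s$ sufficiently large the count is positive, producing the required $P$.

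The main obstacle is the geometric monodromy computation. The specialization arguments sketched above should work routinely, but ruling out every possible collapse $L_1 \cap L_2 \ne K$ demands a careful case analysis, particularly for small $n$ where $S_n$ has additional normal subgroups that must be individually excluded.
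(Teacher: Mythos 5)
Your approach is genuinely different from the paper's. Bender and Pollack parametrize by bivariate polynomials $f(x,t)$ (a space of dimension $\binom{n+2}{2}-1$), carefully exhibit a single good $f$ satisfying six geometric conditions on the plane curves $f=0$ and $f+A=0$ (smoothness, absolute irreducibility, separability of the Gauss map, etc.), and then run Chebotarev over the one-parameter family $u \mapsto f(t+u,t)$. You instead work directly over the $n$-dimensional coefficient space $U$ of monic polynomials and try to compute the geometric monodromy of the pair of covers $Y_1, Y_2 \to U$ in one shot. If your monodromy computation were complete, your route would be shorter and would in fact deliver the quantitative Theorem~\ref{thm:twin2} as well, whereas the paper's two-level construction is designed precisely to control the exceptional set needed there.

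However, the central step --- proving $G^{\mathrm{geom}} = S_n \times S_n$ --- has a real gap. To rule out the twisted-diagonal collapse $L_1 = L_2$, you propose to ``specialize $c$ to values over $\overline{\F_q}$ for which $P_c$ is irreducible while $P_c + A$ factors.'' This cannot happen: over $\overline{\F_q}$ every polynomial of degree $\geq 2$ factors completely, so no such specialization exists. What actually distinguishes $L_1$ from $L_2$ geometrically is ramification, not splitting type. One should show that the branch loci $D_1 = \{\mathrm{disc}(P_c)=0\}$ and $D_2 = \{\mathrm{disc}(P_c+A)=0\}$ are distinct hypersurfaces in $U$ (this requires an argument --- for instance, that the most singular stratum of $D_1$, the curve of $n$-fold roots $(t-\alpha)^n$, is not invariant under the translation by the coefficient vector of $A$, which needs some care when $p \mid n$). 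Then the inertia at a generic point of $D_1 \setminus D_2$ yields an element $(\tau,1) \in G^{\mathrm{geom}}$ with $\tau$ a transposition, and the normal closure of this element, combined with surjectivity onto both factors, forces $G^{\mathrm{geom}} = S_n \times S_n$; this one inertia argument simultaneously rules out both the $L_1 = L_2$ and the quadratic-intersection collapses, as well as the extra small-$n$ cases you defer. The claim ``verified on a convenient specialization'' for the discriminant ratio also needs to be replaced by a genuine argument (e.g., that $\mathrm{disc}(P_c)$ and $\mathrm{disc}(P_c+A)$ are distinct irreducible polynomials, so their ratio is not a square). As written, the monodromy identification is not established, and that is the whole content of the theorem.
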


\begin{thm}\label{thm:twin2} Let $\F_q$ be a finite field of odd cardinality $q$. Let $n$ be a natural number, and let $A$ be a nonzero polynomial in $\F_q[t]$ of degree smaller than $n$. Then
\[ \pi_2(n;A,q)  = \frac{q^n}{n^2}+ O(n!^2 q^{n-1/2}) + O(n \cdot 2^{\binom{n+2}{2}} q^{n-1}), \]
where the implied constants are absolute.
\end{thm}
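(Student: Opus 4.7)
The plan is to parameterize monic polynomials of degree $n$ by the affine space $\A^n$ over $\F_q$ (with coordinates the non-leading coefficients of $F = t^n + a_{n-1}t^{n-1}+\cdots+a_0$) and to count the irreducibility of both $F$ and $F+A$ by means of a function-field Chebotarev argument applied to the splitting-field cover of the product $F(F+A)$. The key Galois-theoretic input is that $F \in \F_q[t]$ is irreducible exactly when the $q$-power Frobenius permutes the roots of $F$ as a single $n$-cycle; so $\pi_2(n;A,q)$ counts the $\F_q$-points of $\A^n$ at which Frobenius acts as an $n$-cycle on each set of roots.

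Let $K = \F_q(a_0,\ldots,a_{n-1})$, let $L$ be the splitting field of $F(F+A)$ over $K$, and set $G = \Gal(L/K)$. The two actions of $G$ give an embedding $G \hookrightarrow \symm{n}\times\symm{n}$. Combining the Weil bound for curves with a pencil argument (equivalently, applying Lang--Weil to the cover, or invoking Deligne's equidistribution) yields an explicit Chebotarev formula of the shape
\[ \pi_2(n;A,q) = \frac{N_G}{\abs{G}}\, q^n + O\!\left(\abs{G}\, q^{n-1/2}\right), \]
where $N_G$ counts the $(\sigma_1,\sigma_2)\in G$ whose two coordinates are each $n$-cycles in $\symm{n}$. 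To this one must add contributions from $\F_q$-points of $\A^n$ lying in the branch locus of $L/K$ or in the subscheme where $F$ and $F+A$ fail to be separable; both loci are cut out by polynomial equations of degree bounded in terms of $n$, and a Schwartz--Zippel style count bounds them by $O(n\cdot 2^{\binom{n+2}{2}} q^{n-1})$, matching the second error term in the statement.

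The crucial algebraic step is to show $G = \symm{n}\times\symm{n}$. Granting this, $N_G = ((n-1)!)^2$, $\abs{G} = (n!)^2$, and $N_G/\abs{G} = 1/n^2$, giving the main term $q^n/n^2$. To compute $G$, note that the generic degree-$n$ monic polynomial has Galois group $\symm{n}$, so each projection $G\to\symm{n}$ is surjective. By Goursat's lemma, $G$ is a fibre product of the two copies of $\symm{n}$ over some common quotient $H$. For $n\geq 5$ the only nontrivial quotient of $\symm{n}$ is the sign quotient $\{\pm 1\}$, so the only obstruction is that the splitting fields of $F$ and of $F+A$ share a common quadratic subfield -- equivalently, that $\mathrm{disc}(F)\cdot\mathrm{disc}(F+A)$ is a square in $K$. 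I would rule this out by specializing the coefficients in such a way that the product of discriminants becomes a squarefree nonconstant polynomial in the remaining variables, using that $A\neq 0$ so that the two discriminants are algebraically independent. The finitely many small $n$, where $\symm{n}$ has additional quotients, can be dispatched by direct calculation.

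The main obstacle I anticipate is precisely this linear disjointness of the two splitting fields. In the Goldbach setting of Theorem \ref{thm:goldbach} the corresponding step is slightly easier, because one factor has degree $n-1$ and the other $n$, so the fibre product analysis is constrained from the start. In the twin case both Galois groups are isomorphic copies of $\symm{n}$, and the shift $F \mapsto F+A$ could a priori induce an identification of invariants of the two splitting fields; proving that no such identification occurs is the heart of the proof. Once $G = \symm{n}\times\symm{n}$ is established, the two error terms in the statement follow from standard effective Chebotarev and variety-counting bounds applied to the cover.
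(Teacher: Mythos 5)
Your proposal takes a genuinely different parameterization from the paper, and there is a real gap at the step you gloss over.

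The paper does not parameterize monic polynomials directly by $\A^n$. Instead it introduces a family $\Ff$ of bivariate polynomials $f(x,t)$ of total degree $n$ (with $\#\Ff = q^I$, $I = \binom{n+2}{2}-1$) and, for each fixed good $f$, considers the one-parameter family of specializations $b \mapsto f(t+b,t)$. For each such $f$ it applies the \emph{one-variable} effective Chebotarev theorem of Fried--Jarden (Lemma~\ref{lem:cheb}) to the cover of $\F_q(u)$ by the splitting field of $f(t+u,t)\bigl(f(t+u,t)+A(t)\bigr)$, whose genus it computes explicitly. Summing over good $f$ and bounding the exceptional set yields the result, with error terms whose implied constants are genuinely absolute because the only geometric invariant entering is a genus with a closed-form expression in $n$.

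Your route --- parameterize by $K = \F_q(a_0, \ldots, a_{n-1})$ and apply a multi-variable effective Chebotarev to the splitting-field cover of $\A^n$ --- is conceptually clean, and your Goursat computation of $G = \Gal(L/K)$ is essentially sound (though you should also explicitly rule out the case where the common quotient $H$ is all of $\symm{n}$, i.e.\ the splitting fields of $F$ and $F+A$ coincide, and you should verify that $L/K$ is a \emph{geometric} extension, which the Chebotarev statement requires). The real gap is your first display. Lang--Weil gives
\[
\#\{\F_q\text{-points with prescribed Frobenius class}\} = \frac{N_G}{\abs{G}}\, q^n + O\bigl(q^{n - 1/2}\bigr),
\]
but the implied constant there depends on the degree and dimension of the ambient variety and on the geometric complexity of the cover, and is \emph{not} simply $\abs{G}$. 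No off-the-shelf effective Chebotarev over a base of transcendence degree $n$ gives the stated bound $O(n!^2\, q^{n-1/2})$ with absolute constants. To obtain an explicit constant one has to reduce to dimension one --- the ``pencil argument'' you mention parenthetically as if it were a formality --- and doing this carefully is precisely where the paper's work lies: choosing the family $\Ff$ so that the bad fibers (where irreducibility, smoothness, separability of the Gauss map, etc.\ fail) can be counted explicitly, and so that the genus of each good one-parameter cover is computable. Without that reduction, you cannot claim absolute implied constants, and the theorem as stated is not proved.
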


Of course Theorem \ref{thm:twin} is contained in Theorem \ref{thm:twin2}, but the strategy of proof suggests to prove the two results separately. Theorem \ref{thm:twin2} gives us an asymptotic formula for $\pi_2(n;A,q)$ in the range of $q$ and $n$ indicated in \eqref{eq:rfqasymptotic}, and this formula agrees with the prediction \eqref{eq:twinheuristic} in this range of $q$ vs. $n$.

A version of Theorem \ref{thm:twin2} appeared in the second author's Ph. D. thesis \cite[Theorem 7.1.4]{pollack08thesis} under a sharper restriction on the characteristic of $\F_q$.

\subsection*{Notation} We write $p$ for the characteristic of $\F_q$ and use $\Falg$ for a fixed algebraic closure.

\section{The polynomial Goldbach problem}
\subsection*{Proof of Theorem \ref{thm:goldbach}}\label{sec:goldbach}

For certain details we shall refer the reader to \cite{bender08A}. Let $\Ff$ be the set of polynomials $f(x,t) \in \F_q[x,t]$ of total degree $n-1$ for which $f(t+b,t)$ is a monic, degree $n-1$ polynomial in $t$ for every choice of $b \in \F_q$. Then (see \cite[p. 7]{bender08A}) we have $\#\Ff = q^{I}$, where $I:= \binom{n+1}{2}-1$.

For every monic polynomial $g \in \F_q[t]$ of degree $n-1$, let $N_g$ be the number of ordered pairs $(f(x,t),b)$ with $f(t+b,t)=g$, where $f \in \Ff$ and $b \in \F_q$.  Then we have the following simple observation.

\begin{lem}\label{lem:ngconst} For every monic polynomial $g\in \F_q[t]$ of degree $n-1$, we have
\[ N_g = q^{\binom{n+1}{2} - (n-1)} = q^{I-n+2}. \]
\end{lem}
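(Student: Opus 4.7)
The plan is to exploit the fact that $\Ff$ is essentially an affine hyperplane in the ambient $\F_q$-vector space $V := \{f \in \F_q[x,t] : \deg_{\text{tot}} f \le n-1\}$, which has dimension $\binom{n+1}{2} = I+1$. Writing a general element as $f = \sum_{i+j \le n-1} c_{ij}\, x^i t^j$, a direct expansion shows that the coefficient of $t^{n-1}$ in $f(t+b,t)$ equals $\sum_{i+j=n-1} c_{ij}$ — and, crucially, this is independent of $b$. Hence the defining condition of $\Ff$ collapses to the single linear equation $\sum_{i+j=n-1} c_{ij} = 1$, which on the side gives a clean reproof of $|\Ff| = q^I$.

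Next I would analyze the substitution map $\phi_b \colon V \to \F_q[t]_{\le n-1}$, $f \mapsto f(t+b,t)$, one value of $b$ at a time. This map is $\F_q$-linear, and its kernel is precisely the set of $f \in V$ that are divisible (in $\F_q[x,t]$) by $x-t-b$. Writing $f = (x-t-b)\,h$ with $h \in \F_q[x,t]$ of total degree at most $n-2$ identifies $\ker \phi_b$ with the space of such $h$, whose dimension is $\binom{n}{2}$. Since the codomain has dimension $n = \binom{n+1}{2} - \binom{n}{2}$, the map $\phi_b$ is surjective, and every element of $\F_q[t]_{\le n-1}$ has exactly $q^{\binom{n}{2}}$ preimages.

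To finish, fix a monic $g$ of degree $n-1$ and fix $b \in \F_q$. Any $f \in \phi_b^{-1}(g)$ satisfies $\sum_{i+j = n-1} c_{ij} = 1$ (since this is the leading coefficient of $\phi_b(f) = g$), so by the first paragraph $f$ automatically lies in $\Ff$. Thus $|\{f \in \Ff : \phi_b(f) = g\}| = q^{\binom{n}{2}}$ for each $b$, and summing over the $q$ choices of $b$ yields
\[ N_g = q \cdot q^{\binom{n}{2}} = q^{\binom{n}{2}+1} = q^{I-n+2}, \]
using the identity $\binom{n}{2}+1 = \binom{n+1}{2}-(n-1) = I-n+2$. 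The whole argument is a dimension count and I do not anticipate a real obstacle; the one conceptual point worth flagging is the first-paragraph observation that the condition cutting $\Ff$ out of $V$ is linear and $b$-independent, which is what forces the fibers of $\phi_b$ over monic polynomials of degree $n-1$ to sit inside $\Ff$.
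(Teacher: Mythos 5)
Your proof is correct, and it follows the same overall strategy as the paper: fix $b\in\F_q$, count the polynomials $f\in\Ff$ with $f(t+b,t)=g$, and multiply by the $q$ choices of $b$. Where you differ is in how the fiber count $q^{\binom{n}{2}}$ is obtained. The paper writes $f=\sum_{i=0}^{n-1}c_i(x)t^i$ with $\deg c_i\le n-1-i$, observes that $c_1,\ldots,c_{n-1}$ may be chosen freely (contributing $q^{(n-1)+\cdots+1}=q^{\binom{n}{2}}$ choices), and then exhibits the unique $c_0(x)=g(x-b)-\sum_{i\ge1}c_i(x)(x-b)^i$; it leaves implicit the check that this $f$ actually lies in $\Ff$, i.e.\ that $f(t+b',t)$ is monic of degree $n-1$ for \emph{every} $b'$, not just the fixed $b$. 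You instead identify $\ker\phi_b$ with the multiples of $x-t-b$ of total degree $\le n-1$ and use rank--nullity, which is a slightly more structural route to the same count. You also isolate the one genuinely delicate point -- that membership in $\Ff$ is a single $b$-independent linear condition $\sum_{i+j=n-1}c_{ij}=1$, which is automatically satisfied once $f(t+b,t)$ is monic of degree $n-1$ for a single $b$ -- and this makes the ``automatically lies in $\Ff$'' step transparent in a way the paper's proof does not. So: same decomposition into a sum over $b$, different (linear-algebraic versus explicit-parameterization) bookkeeping for the fibers, with your version surfacing a tacit assumption of the original.
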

\begin{proof} Let $b \in \F_q$. Suppose $c_1(x), \dots, c_{n-1}(x) \in \F_q[x]$ and that the degree of each $c_i$ does not exceed $n-1-i$. Then there is a unique choice of $c_0(x) \in \F_q[x]$ for which $f(x,t):= \sum_{i=0}^{n-1} c_i(x) t^{i}$  belongs to $\Ff$ and satisfies $f(t+b,t) = g(t)$, namely
\[ c_0(x) := g(x-b) - \sum_{i=1}^{n-1} c_i(x) (x-b)^{i}. \]
It follows that
\[ N_g = q  \cdot q^{1+2+\dots+(n-1)} = q^{n(n-1)/2+1} = q^{n(n+1)/2-(n-1)}, \]
as desired.
\end{proof}

Let $F$ be a univariate polynomial of degree $n\geq 2$ over $\F_q$, and let $\R$ be the set of degree $n-1$ Goldbach summands of $F$, i.e.,
\[ \R:=\{g \in \F_q[t]: \deg{g}=n-1, \text{ both $g$ and $F-g$ are monic irreducibles}\}. \]
Thus $R(F;q) = \#\R$. Lemma \ref{lem:ngconst} reduces Theorem \ref{thm:goldbach} to the following estimate for $\sum_{g\in \R}{N_g}$.
\begin{lem}\label{lem:sumrg} We have \[ \sum_{g \in \R} N_g = \frac{q^{I+1}}{n(n-1)} + O(N q^{I+1/2})
+ O(q^{I} 2^{\binom{n+2}{2}} n),\]
where $N:= (n-1)! n!$.
\end{lem}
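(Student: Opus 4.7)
\medskip

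\noindent\textbf{Proof plan for Lemma \ref{lem:sumrg}.} The plan is to swap the order of summation and interpret the sum geometrically. By the definition of $N_g$,
\[ \sum_{g \in \R} N_g = \#\bigl\{ (f,b) \in \Ff \times \F_q : f(t+b,t) \text{ and } F(t) - f(t+b,t) \text{ are both monic irreducible}\bigr\}. \]
The ambient set $\Ff \times \F_q$ is the set of $\F_q$-points of the affine space $V := \Ff \times \A^1$ of dimension $I+1$, so we are asking for the number of $\F_q$-points of $V$ on which a pair of irreducibility conditions holds.

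For a point $(f,b) \in V$ let $g_{f,b}(t) := f(t+b,t)$ and $h_{f,b}(t) := F(t)-g_{f,b}(t)$, which have degrees $n-1$ and $n$ respectively. By the standard dictionary between irreducibility of a polynomial over $\F_q$ and the cycle structure of Frobenius, the double-irreducibility condition is that Frobenius acts on the roots of $g_{f,b}$ as an $(n-1)$-cycle and on the roots of $h_{f,b}$ as an $n$-cycle. The next step, following the method of \cite{bender08A}, is to show that the generic Galois group of the pair $(g_{f,b},h_{f,b})$ over the function field $\F_q(V)$ is the full product $S_{n-1}\times S_n$. This amounts to two claims: each of $g_{f,b}$ and $h_{f,b}$ has generic Galois group $S_{n-1}$ and $S_n$ respectively (which reflects the fact that $\Ff$ is large enough to parametrize generic monic polynomials of the relevant degree together with a ``shift'' coming from $b$), and the two splitting fields are linearly disjoint over $\F_q(V)$. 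The linear disjointness is the subtle point, since $g_{f,b}$ and $h_{f,b}$ are tied by the fixed equation $g+h=F$; however, this is a single affine relation and, away from a proper closed subset of $V$, does not force any algebraic relation between the two splitting fields.

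Once the generic Galois group is identified, I would apply an effective Chebotarev/equidistribution statement for Galois covers of $\F_q$-varieties (via Deligne's purity and the Grothendieck–Lefschetz trace formula, or equivalently via Weil-type bounds on Frobenius conjugacy classes). The main term comes from the proportion of $(n-1)$-cycles in $S_{n-1}$ times the proportion of $n$-cycles in $S_n$, namely
\[ \frac{(n-2)!\cdot (n-1)!}{(n-1)!\cdot n!} = \frac{1}{n(n-1)}, \]
so the main contribution is $q^{I+1}/(n(n-1))$. The error term of size $O(|S_{n-1}\times S_n|\cdot q^{(I+1)-1/2}) = O(N q^{I+1/2})$ is the expected Weil-type bound, with the factor $N=(n-1)!n!$ being the size of the Galois group. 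Finally, the exceptional locus where the Chebotarev statement fails (namely the $(f,b)$ for which the discriminant of $g_{f,b}h_{f,b}$ vanishes, or for which the extension degenerates) sits inside a hypersurface whose defining equation has degree controlled by the bidegree of $f(x,t)$, and so has $O(q^I \cdot 2^{\binom{n+2}{2}} n)$ points by a Schwartz--Zippel/Lang--Weil bound; this accounts for the second error term.

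The main obstacle I anticipate is establishing the generic Galois group computation, and in particular the linear disjointness of the splitting fields of $g_{f,b}$ and $h_{f,b}$. The other ingredients — effective Chebotarev on an affine variety and the exceptional-locus bound — are by now standard, but the Galois-theoretic step requires a careful analysis of the two-variable polynomial family and almost certainly reuses (or generalizes) the irreducibility/monodromy argument from \cite{bender08A} rather than being re-derived from scratch here.
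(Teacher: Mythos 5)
Your high-level plan — identify the generic Galois group as $S_{n-1}\times S_n$, apply an effective Chebotarev theorem, and bound the exceptional locus separately — is the right skeleton, and you correctly identify the Galois-group/linear-disjointness computation as the heart of the matter (the paper in fact imports this wholesale from \cite{bender08A}). But there is a genuine gap in the middle step that the paper takes pains to avoid.

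You propose to apply a higher-dimensional Chebotarev/Lang--Weil theorem directly to the cover over the $(I+1)$-dimensional variety $V = \Ff\times\A^1$, and you assert an error of $O(|S_{n-1}\times S_n|\, q^{(I+1)-1/2})$, "with the factor $N=(n-1)!n!$ being the size of the Galois group." That is not what effective Chebotarev over a $d$-dimensional base gives. The error constant there is governed by the sum of $\ell$-adic Betti numbers of the cover (equivalently, by degree and embedding data in the Lang--Weil form), not by $|G|$; for an $(I+1)$-dimensional Galois cover whose complexity grows with $n$, there is no a priori reason this constant is $\ll N$, and making it explicit is a real piece of work you have not addressed. The paper sidesteps this entirely by fibering over $\Ff$: for each nonexceptional $f$ it applies the Fried--Jarden explicit Chebotarev theorem (Lemma~\ref{lem:cheb}) to the one-variable extension $L/\F_q(u)$, where the error constant is literally the genus, and the genus is computable by Riemann--Hurwitz ($g = 1 + N(n^2-2n)$). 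Per fiber this yields $O(Nq^{1/2})$, and summing the $\le q^I$ fibers gives $O(Nq^{I+1/2})$. You arrive at the same final bound, but by a shortcut that is not justified; to make your route rigorous you would essentially be forced back into the paper's fiberwise argument. There is also a smaller imprecision: the exceptional set in the paper consists of \emph{bad $f$} (those violating the seven conditions of \cite{bender08A} so that the Galois groups are not generic), not a discriminant hypersurface in $(f,b)$; the $O(n^2)$ bad values of $b$ per good $f$ are handled separately via the remark after Lemma~\ref{lem:disc} and absorbed into the smaller error term.
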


The proof of Lemma~\ref{lem:sumrg} depends on the following lemma due to Fried \& Jarden (\cite[Proposition 6.4.8]{fj05}).
\begin{lem}[Explicit Chebotarev density theorem for degree $1$ primes]\label{lem:cheb} Suppose that $L/\F_q(u)$ be a finite, geometric Galois extension of degree $N$. Let $\C$ be a conjugacy class of $\Gal(L/\F_q(u))$ and let $\Pp$ be the set of unramified, degree $1$ primes of $\F_q(u)$ for which $\leg{L/\F_q(u)}{P}= \C$. Then, with $g$ denoting the genus of $L/\F_q$, we have
\[ \left|\#\Pp- \frac{\#\C}{N} q\right| \leq 2\frac{\#\C}{N} (gq^{1/2} + g + N).\]
\end{lem}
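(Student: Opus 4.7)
The starting point is to interchange the order of summation. By the definition of $N_g$ and $\R$,
\[ \sum_{g\in\R} N_g = \#\bigl\{(f,b)\in \Ff\times \F_q : f(t+b,t) \text{ and } F(t)-f(t+b,t) \text{ are both monic irreducible}\bigr\}. \]
For each $f\in\Ff$, I will view $u=b$ as a variable and work with the two polynomials in $t$ over $\F_q(u)$,
\[ P_1(u,t) := f(t+u,t), \qquad P_2(u,t) := F(t) - f(t+u,t), \]
of $t$-degrees $n-1$ and $n$ respectively. Let $L_f$ be the splitting field of $P_1\cdot P_2$ over $\F_q(u)$, and let $G_f = \Gal(L_f/\F_q(u))$, naturally a subgroup of $\symm{n-1}\times\symm{n}$. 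The unramified specializations $u\mapsto b\in\F_q$ with $P_1(b,t)$ irreducible of degree $n-1$ and $P_2(b,t)$ irreducible of degree $n$ are exactly those whose Frobenius lies in the conjugacy class $\C_f\subseteq G_f$ of elements acting as an $(n-1)$-cycle on the roots of $P_1$ and an $n$-cycle on those of $P_2$. Lemma~\ref{lem:cheb} then counts these $b$ up to an error controlled by the genus of $L_f$ and the order of $G_f$.

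The next step is to split $\Ff$ into a \emph{good} set $\Ff_0$ and a \emph{bad} set $\Ff_1=\Ff\setminus\Ff_0$. An $f$ is good if $G_f$ is the full product $\symm{n-1}\times\symm{n}$, the polynomials $P_1,P_2$ are separable and coprime over $\F_q(u)$, and the genus of $L_f$ admits a bound polynomial in $n$ (say $g_{L_f}\leq cN$ for some absolute constant $c$, where $N=(n-1)!\,n!$). For such $f$, a direct count gives $\#\C_f/|G_f| = 1/(n(n-1))$, so Lemma~\ref{lem:cheb} yields
\[ \#\{b\in\F_q : \text{both specializations irreducible}\} = \frac{q}{n(n-1)} + O\!\bigl(q^{1/2}\bigr), \]
with the implied constant polynomial in $n$ and ultimately absorbed into $N$. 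Summing this estimate over $f\in\Ff_0$ produces the main term $q^{I+1}/(n(n-1))$ together with the contribution $O(Nq^{I+1/2})$.

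For the bad set I will use the trivial bound of $q$ on the number of valid $b$, so the total bad contribution is at most $q\cdot \#\Ff_1$. The heart of the proof is therefore to show
\[ \#\Ff_1 \;\ll\; n\cdot 2^{\binom{n+2}{2}} \, q^{I-1}. \]
The exceptional locus is cut out by conditions of the form: (i) $P_1$ or $P_2$ has a repeated factor over $\overline{\F_q(u)}$, (ii) $P_1$ and $P_2$ share a root, or (iii) $G_f$ is a proper subgroup of $\symm{n-1}\times\symm{n}$. Each of these is encoded by the vanishing of certain discriminant, resultant, or Galois-theoretic resolvent polynomials in the coefficients of $f$, and each defines a proper closed subvariety of $\Ff\cong \A^I$. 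A Schwartz--Zippel / Lang--Weil style bound then gives the codimension-one estimate; the factor $2^{\binom{n+2}{2}}$ absorbs the degrees of these defining equations, whose construction parallels \cite{bender08A}.

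\textbf{Main obstacle.} The hardest part is the Hilbert-irreducibility input establishing that $G_f = \symm{n-1}\times\symm{n}$ away from a variety of codimension at least one, together with the accompanying bound on the genus of $L_f$ for $f\in\Ff_0$. The separate irreducibility of each $P_i$ and the ``independence'' statement (that the two Galois groups are essentially unrelated) both have to be extracted geometrically from the structure of $f(t+u,t)$ and of $F(t)-f(t+u,t)$; this is where the translation parameter $b$ and the specific shape of $\Ff$ (designed so that the map $(f,b)\mapsto f(t+b,t)$ is essentially $N_g$-to-one by Lemma~\ref{lem:ngconst}) play their role. Once the codimension-one exceptional set is exhibited, a Riemann--Hurwitz computation bounds the generic genus and the pieces fit together to yield Lemma~\ref{lem:sumrg}.
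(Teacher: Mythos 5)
Your proposal does not prove the statement it was asked to prove. The statement is Lemma~\ref{lem:cheb} itself: the explicit Chebotarev density theorem for degree $1$ primes of $\F_q(u)$, with the error bound $2\frac{\#\C}{N}(gq^{1/2}+g+N)$. What you have written is instead an outline of the proof of Lemma~\ref{lem:sumrg} (the estimate for $\sum_{g\in\R}N_g$), and in the course of that outline you explicitly \emph{invoke} Lemma~\ref{lem:cheb} as a known input (``Lemma~\ref{lem:cheb} then counts these $b$ up to an error controlled by the genus\dots''). So the argument is circular with respect to the assigned task: the very result to be established is assumed.

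A proof of Lemma~\ref{lem:cheb} lives in a different part of the subject entirely. It requires relating the degree $1$ primes of $\F_q(u)$ with prescribed Artin symbol to places of $L$ fixed by a twisted Frobenius, and then bounding the count via the Riemann hypothesis for the curve corresponding to $L$ (equivalently, the bound $|\#X(\F_q)-(q+1)|\le 2gq^{1/2}$ and its refinements), together with bookkeeping for ramified and higher-degree places that produces the $g+N$ term. None of that machinery appears in your write-up. For what it is worth, the paper itself does not reprove this lemma either: it cites Proposition 6.4.8 of Fried and Jarden, with a remark explaining why the stated error term simplifies in the degree $1$ ($k=1$) case. If your intent was to prove Lemma~\ref{lem:sumrg}, your sketch is broadly aligned with the paper's argument (good/bad dichotomy on $\Ff$, full Galois group and genus bound on the good set, trivial bound times the codimension-one count on the bad set), but as a proof of Lemma~\ref{lem:cheb} it is a non-starter.
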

Note that the estimate differs from Proposition 6.4.8 in~\cite{fj05} since, in their notation, we have $k=1$ and therefore the second term in the first formula on page 120 is just the empty set.

We  also require the following lemma (\cite[Lemma 4.3.2]{pollack08thesis}; see also \cite[\S3]{BW05}).

\begin{lem}\label{lem:disc} Let $f(t,u) \in \F_q[t,u]$. Suppose that $f$ as a polynomial in $t$ is monic of degree $n$ and is irreducible and separable over $\F_q(u)$. Let $K$ be the splitting field of $f$ over $\F_q(u)$. Suppose that for the element $b \in \F_q$, the discriminant of the polynomial $f(t,b)$ is nonzero. Then $f(t,b)$ is irreducible over $\F_q$ precisely when the Frobenius conjugacy class $(K/\F_q(u),P_b)$ is the conjugacy class of an $n$-cycle in $\Gal(K/\F_q(u))$. Here $P_b$ denotes the prime of $\F_q(u)$ associated to the $(u-b)$-adic valuation.
\end{lem}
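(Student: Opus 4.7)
The plan is to translate the factorization of $f(t,b)$ over $\F_q$ into the cycle structure of Frobenius acting on the roots of $f(t,u)$ in $K$; this is a classical Dedekind--Kummer-style identification. First, label the roots of $f(t,u)$ in $K$ by $\alpha_1, \ldots, \alpha_n$, all distinct since $f$ is separable over $\F_q(u)$. Let $D(u) \in \F_q[u]$ denote the discriminant of $f$ viewed as a polynomial in $t$. Since $D(b) = \mathrm{disc}(f(t,b))$ is nonzero by hypothesis, the prime $P_b = (u-b)$ does not divide $D$. Standard discriminant considerations then imply that $P_b$ is unramified in $L := \F_q(u)(\alpha_1)$, and hence also in its Galois closure $K$, since $K$ is the compositum of the conjugate subfields of $L$, all of which have the same discriminant over $\F_q(u)$.

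Next, I fix a prime $\mathfrak{P}$ of the integral closure $\mathcal{O}_K$ of $\F_q[u]$ in $K$ lying above $P_b$, and let $\sigma \in G := \Gal(K/\F_q(u))$ denote the associated Frobenius, characterized by $\sigma(x) \equiv x^q \pmod{\mathfrak{P}}$ for all $x \in \mathcal{O}_K$. Because $f$ is monic in $t$ with coefficients in $\F_q[u]$, each $\alpha_i$ lies in $\mathcal{O}_K$; reducing modulo $\mathfrak{P}$ yields elements $\bar\alpha_i \in \overline{\F_q}$ that are roots of $f(t,b)$. The discriminant hypothesis forces these $n$ reductions to be pairwise distinct, so $\{\bar\alpha_1, \ldots, \bar\alpha_n\}$ is precisely the root set of $f(t,b)$ in $\overline{\F_q}$, and $\alpha_i \mapsto \bar\alpha_i$ is a bijection between the two root sets.

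Finally, I compare two permutations: $\sigma$ acting on $\{\alpha_1, \ldots, \alpha_n\}$ and the $q$-power Frobenius acting on $\{\bar\alpha_1, \ldots, \bar\alpha_n\}$. From $\sigma(\alpha_i) \equiv \alpha_i^q \pmod{\mathfrak{P}}$ together with the bijection above, these two actions have identical cycle structure. The cycle decomposition of the $q$-power Frobenius on the roots of a separable polynomial over $\F_q$ records its factorization into monic irreducibles, with each cycle being the root set of one irreducible factor. Hence $f(t,b)$ is irreducible over $\F_q$ exactly when $\sigma$ acts as a single $n$-cycle on $\{\alpha_i\}$, and since cycle type is a conjugacy invariant in the symmetric group, this depends only on the conjugacy class of $\sigma$, namely the Frobenius class $(K/\F_q(u), P_b)$. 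The main technical point is the bijection of the previous paragraph, which relies on the discriminant hypothesis both to ensure $P_b$ is unramified in $K$ (so that $\sigma$ is a genuine element rather than a coset modulo inertia) and to ensure the $\bar\alpha_i$ remain distinct so that cycle type is preserved under reduction.
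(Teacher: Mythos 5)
Your proof is correct and complete: the reduction of the Galois action on the roots $\alpha_i$ modulo a prime above $P_b$, the use of the nonvanishing specialized discriminant both to kill inertia and to keep the reduced roots distinct, and the identification of the cycle type of Frobenius with the factorization type of $f(t,b)$ is exactly the standard Dedekind-style argument underlying this lemma. The paper itself gives no proof, quoting the result from \cite[Lemma 4.3.2]{pollack08thesis} and \cite[\S3]{BW05}, and those sources argue along the same lines, so your write-up is a faithful self-contained substitute.
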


\begin{rmk} Suppose that $\deg_{u} f = m$. Then the $t$-discriminant of $f(t,u)$ is a polynomial in $u$ of degree not exceeding $(2n-1) m$, as one sees by applying the Sylvester determinant representation of the discriminant. Moreover, this $t$-discriminant is nonvanishing, since $f$ is separable by hypothesis. Therefore the hypothesis of the lemma excludes at most $(2n-1)m$ values of $b \in \F_q$.
\end{rmk}

\begin{proof}[Proof of Lemma \ref{lem:sumrg}] Recalling the definition of $N_g$, we find that
\begin{equation}\label{eq:ngsum} \sum_{g \in \R} N_g = \sum_{f \in \Ff} \sum_{\substack{b \in \F_q \\ f(t+b,t) \text{ and } F-f(t+b,t) \text{ both irreducible}}}{1}.
\end{equation} We shall show that when $q$ is large compared to $n$, the inner sum can be estimated fairly precisely for almost all choices of $f \in \Ff$.

Indeed, in \cite{bender08A} it is shown that for $q$ large compared to $n$, most $f \in \Ff$ are such that both  of the following statements hold:
\begin{itemize}
\item Let $K_1$ be the splitting field of $f(t+u,t)$ over $\F_q(u)$ and let $K_2$ be the splitting field of $F(t)-f(t+u,t)$ over $\F_q(u)$. Then $\Gal(K_1/\F_q(u))$ is the full symmetric group on $n-1$ letters and $\Gal(K_2/\F_q(u))$ is the full symmetric group on $n$ letters.
\item Let $L$ be the compositum of $K_1$ and $K_2$. Then $L/\F_q(u)$ is a geometric Galois extension, and the map
\begin{align}\label{eq:isomorphism}
\Gal(L/\F_q(u)) &\cong \Gal(K_1/\F_q(u)) \times \Gal(K_2/\F_q(u)), \\
\sigma &\mapsto (\sigma|_{K_1}, \sigma|_{K_2})\notag
\end{align}
is an isomorphism.
\end{itemize}
More precisely, this holds for all $f \in \Ff$ satisfying seven technical conditions (\cite[p. 4]{bender08A}, cf. the six conditions of \S\ref{sec:twins} below), which in total exclude
\begin{equation}\label{eq:exceptional}
 \ll n \cdot 2^{\binom{n+2}{2}} q^{I-1} \end{equation}
polynomials $f \in \Ff$ (see \cite[p. 8]{bender08A}). Let $\Ff'$ be the subset of $\Ff$ consisting of nonexceptional polynomials, so that \eqref{eq:exceptional} is an upper bound on $\#\Ff\setminus \Ff'$.

Suppose $f \in \Ff'$. By Lemma \ref{lem:disc} and the subsequent remark, we have that excluding $O(n^2)$ values of $b \in \F_q$, the polynomial $f(t+b,t)$ is irreducible precisely when $(K_1/\F_q(u), P_b)$ is the conjugacy class of an $(n-1)$-cycle. Similarly, excluding $O(n^2)$ values of $b$, we have that $F-f(t+b,t)$ is irreducible exactly when $(K_2/\F_q(u),P_b)$ is the conjugacy class of an $n$-cycle. This means that by \eqref{eq:isomorphism}, the simultaneous irreducibility of these polynomials is, excepting $O(n^2)$ values of $b$, equivalent to $(L/\F_q(u), P_b)$ belonging to a certain conjugacy class $\C$ of $\Gal(L/\F_q(u))$ of order $(n-2)! (n-1)!$.

We apply the version of the Chebotarev density theorem quoted above with this conjugacy class $\C$. Since $[L:\F_q(u)] = (n-1)! n! = N$, we find that for $f \in \Ff'$, the inner sum in \eqref{eq:ngsum} is
\[ \frac{q}{n(n-1)} + O\left(\frac{1}{n^2}(g q^{1/2} + g + N)\right) + O(n^2). \]
It can be shown that $g = 1 + N(n^2-2n)$ (see \cite[p. 9]{bender08A}), which implies a bound on the error terms of $O(N q^{1/2})$. Summing over $\Ff = \Ff' \cup (\Ff\setminus \Ff')$, we find that the sum in \eqref{eq:ngsum} is given by
\begin{multline*} \sum_{f \in \Ff'} \frac{q}{n(n-1)} + O(\#\Ff' N q^{1/2}) + O(q(\#\Ff\setminus \Ff')) \\
= \frac{q^{I+1}}{n(n-1)} + O\left(\frac{q}{n(n-1)} \#\Ff\setminus \Ff'\right) + O(N q^{I+1/2}) + O(q \#\Ff\setminus \Ff') \\ = \frac{q^{I+1}}{n(n-1)} + O(N q^{I+1/2}) + O(n \cdot 2^{\binom{n+2}{2}} q^{I}),\end{multline*}
using the trivial estimate $\#\Ff' \leq \#\Ff = q^{I}$ and the bound \eqref{eq:exceptional} for $\#\Ff\setminus\Ff'$.
\end{proof}

\section{The polynomial twin prime problem}\label{sec:twins}
\subsection*{Proof of Theorem \ref{thm:twin}}
The proof is very similar to that of Theorem A given in \cite{bender08A}. Let $\mathfrak X$ denote the family of all plane algebraic curves $f_{1(c)}(x,t)=0$ over $\Falg$ of degree at most $n$ in $\A^2$. Let $I = \binom{n+2}{2}-1$ and fix an embedding of $\mathfrak X$ by a map
$$\begin{array}{rcl}
{\mathfrak X} & \longrightarrow  & \A^{I+1}\times \A^{2},\\
f_{1(c)}      & \mapsto          & (c)\times \{f_{1(c)}(x,t)=0\},
\end{array}$$
where $(c)$ is the coefficient vector of $f_{1(c)}$. Let $\Ff$ be the family of affine curves of degree $n$ in $\A^2$ such that $f_{1(c)}(t+b,t)$ is monic for any $b$. The coefficients $(c)$ of such a curve correspond to a point on an affine subspace $H\subset \A^{I+1}$ of codimension one. In particular, we have that $\#\Ff = q^{I}$. For any element $f_{1(c)}\in{\Ff}$ with coordinates $(c)\in H_{\Falg}$, we set
$$
f_{2(c)}(x,t)  =  f_{1(c)}(x,t)+A(t).
$$
We consider the families of curves in $H_{\Falg}\times \P_{\Falg}^{2}$
$$
\xymatrix@C=8ex{
C_{1(c)}\ar[dr]^{\alpha_1} & & C_{2(c)} \ar[dl]_{\alpha_2} \\
  &H_{\Falg},&
}
$$
where $C_{i(c)}=\alpha^{-1}_i (c)$ denotes the Zariski closure of the affine curve $f_{i(c)}(x,t)=0$ in $\P^{2}_{\Falg}$. Let $\beta_i$ denote the rational map $C_i\rightarrow \P^1$ given by the projection from $M=(1,1,0)$ in homogenised coordinates $(x,t,z)$. The maps $\beta_1$ and $\beta_2$ are in fact morphisms since the point $M$ does not lie on $C_{1(c)}$ or $C_{2(c)}$. Indeed, we are assuming $(c)\in H$ and so monicity of the two polynomials $f_i(t+b,t)$ for any $b$ implies that $M\not\in C_i$.

Imitating the argument of \cite{bender08A}, we prove the following: For all sufficiently large $s$, there is a point $(c)$ in $H_{\F_{q^{s}}}(\F_{q^{s}})$ such that the following conditions are satisfied:
\begin{enumerate}
\item Both $C_{1(c)}$ and $C_{2(c)}$ are smooth.
\item Both $C_{1(c)}$ and $C_{2(c)}$ are absolutely irreducible.
\item The Gauss maps of both $C_{1(c)}$ and $C_{2(c)}$ are separable.
\item The morphisms $\beta_1$ and $\beta_2$ are generic (in the sense of \cite[Definition 2.2]{bender08A}).
\item No line $x=t+b$ is tangent to both $C_{1(c)}$ and $C_{2(c)}$.
\item The line at infinity is not tangent to $C_1$ or $C_2$.
\end{enumerate}
For each of the six conditions, the points $(c)$ satisfying it form an open subscheme of $H_{\Falg}$. We now check that each of these subschemes is nonempty.

For the smoothness condition (1), we consider two cases: First suppose that $p \nmid n$. Consider the polynomial $f_{1}:= 2x^n-t^n+d$, where $d \in \F_q$. A short calculation shows that $C_{1(c)}$ is smooth for any nonzero $d$, while $C_{2(c)}$ is singular for at most $n$ different values of $d$. So suppose that $p$ divides $n$, and write
\[ A(t) = a_m t^m + a_{m-1} t^{m-1} + \dots + a_1 t + a_0. \]
Consider
\[ f_1:= 2x^n + x - t^n + d t^{n-1}. \]
If $d \neq 0$ it is easily checked that $C_{1(c)}$ is smooth. The nonvanishing of $d$ also guarantees that $C_{2(c)}$ is smooth, unless $m=n-1$, in which case smoothness follows under the additional assumption that $d+a_m \neq 0$. Therefore in either case the points $(c) \in H$ whose fibres $C_{i(c)}$ are smooth curves form a nonempty open subscheme $A_1$ (say).

To show that the subscheme $A_2$ corresponding to condition (2) is nonempty, we have to exhibit an $f_{1(c)}$ with $(c) \in H$ for which both $f_{1(c)}$ and $f_{2(c)}:= f_{1(c)} + A(t)$ are absolutely irreducible. Consider
\[ f_1:= x^n + b_1 t + b_0, \]
where $b_0$ and $b_1$ are chosen so that $b_1 \neq 0$, $b_1 \neq -a_1$ and $b_0 = -a_0$. Then the Eisenstein criterion implies the absolute irreducibility of both $f_{1(c)}$ and $f_{1(c)} + A(t)$, the respective primes being $b_1t + b_0$ and $t$.

Now consider the condition (3) of separability of the Gauss maps. This corresponds to the subscheme $A_3$, for which we consider
\[ f_1:= 2x^n - x^2 t^{n-2}. \]
It follows that $A_3$ is nonempty since both $f_1{(c)}$ and $f_2{(c)}$ intersect the line at infinity in a point of multiplicity $2$.

Conditions (4)--(6) can be treated by the same argument used to treat conditions (4)--(7) in \cite[bottom of p.5]{bender08A}. The single change necessary is that we now require an example showing the nonemptiness of the open subscheme corresponding to the values $(c)$ for which the line at infinity is tangent to neither $C_{1(c)}$ nor $C_{2(c)}$. We may choose
\[ f_1:= 2x^{n-j}t^{j}-t^{n} \]
with $j=1$ if $p\mid n$ and $j=0$ otherwise.

To complete the proof of Theorem \ref{thm:twin} it is enough to apply the Chebotarev density theorem exactly as in the corresponding proof of Theorem A. We obtain that for $s$ sufficiently large, there is a $b_0 \in \F_{q^s}$ for which both $f_1(t+b_0, t)$ and $f_2(t+b_0,t) = f_1(t+b_0,t) + A(t)$ are irreducible. This means that $f_1(t+b_0,t)$ and $f_2(t+b_0,t)$ are our desired twin prime pair.

\subsection*{Proof of Theorem \ref{thm:twin2}}
With the proof of Theorem \ref{thm:twin} out of the way, we can prove Theorem \ref{thm:twin2} by an argument analogous to that given for Theorem \ref{thm:goldbach} above. Setting
\[ \T:= \{g \in \F_q[t]: \deg{g}=n, \text{ both $g$ and $g+A$ are monic irreducibles}\}, \]
we have $\pi_2(n;A,q) = \#\T$. As in \S\ref{sec:goldbach}, let $N_g$ be the number of pairs $(f(x,t), b)$ with $f \in \Ff$, $b \in \F_q$, and $f(t+b,t)=g$. Since now the polynomials $f \in \Ff$ have total degree $n$, we replace $n-1$ by $n$ in Lemma \ref{lem:ngconst} to obtain the following.

\begin{lem}\label{lem:ngconst2} For every monic polynomial $g \in \F_q[t]$ of degree $n$, we have $N_g = q^{\binom{n+2}{2}-n} = q^{I+1-n}$.
\end{lem}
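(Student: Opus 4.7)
My plan is to mimic the proof of Lemma \ref{lem:ngconst} almost verbatim, simply bumping the total degree from $n-1$ up to $n$. Fix $b \in \F_q$, and suppose $c_1(x), \ldots, c_n(x) \in \F_q[x]$ are chosen with $\deg c_i \leq n-i$. I would claim there is a unique $c_0(x) \in \F_q[x]$ for which $f(x,t) := \sum_{i=0}^{n} c_i(x) t^i$ belongs to $\Ff$ and satisfies $f(t+b,t) = g(t)$, namely
\[ c_0(x) := g(x-b) - \sum_{i=1}^{n} c_i(x)(x-b)^i. \]
Substituting $x = t+b$ immediately gives $f(t+b,t) = g(t)$, and since $\deg g = n$ and $\deg\bigl(c_i(x)(x-b)^i\bigr) \leq (n-i)+i = n$, the polynomial $c_0$ has degree at most $n$, so $f$ has total degree at most $n$.

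The only point requiring a moment of care is to confirm that such an $f$ actually lies in $\Ff$, i.e.\ that $f(t+b',t)$ is monic of degree $n$ for \emph{every} $b' \in \F_q$, not just the chosen $b$. Writing $f_n$ for the degree-$n$ homogeneous part of $f$, one checks that the leading coefficient of $f(t+b',t)$ in $t$ is $f_n(1,1)$, independent of $b'$. If $c_{i,n-i}$ denotes the coefficient of $x^{n-i}$ in $c_i(x)$, then $f_n(1,1) = \sum_{i=0}^n c_{i,n-i}$. Reading off the $x^n$-coefficient in the defining formula for $c_0$ gives $c_{0,n} = 1 - \sum_{i=1}^n c_{i,n-i}$ (the $1$ being the leading coefficient of $g$), so the sum telescopes to $f_n(1,1) = 1$ regardless of the choices of $b$ and $c_1,\ldots,c_n$. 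Hence $f \in \Ff$ automatically, and conversely any $f \in \Ff$ with $f(t+b,t) = g(t)$ must have its $c_1,\ldots,c_n$ of the prescribed degree bounds with $c_0$ given by the displayed formula.

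Counting freedom, the tuple $(c_1,\ldots,c_n)$ ranges over a space of dimension $\sum_{i=1}^n (n-i+1) = n(n+1)/2$, and $b$ contributes one more factor of $q$, yielding
\[ N_g = q \cdot q^{n(n+1)/2} = q^{n(n+1)/2 + 1} = q^{\binom{n+2}{2} - n} = q^{I+1-n}, \]
as claimed. There is no real obstacle here: the argument is entirely parallel to Lemma \ref{lem:ngconst}, and the only nontrivial item is the verification that $f \in \Ff$, which falls out of the telescoping identity above.
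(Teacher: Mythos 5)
Your proposal is correct and matches the paper's approach exactly: the paper gives no separate proof, stating only that one replaces $n-1$ by $n$ in Lemma \ref{lem:ngconst}, which is precisely what you carry out. The extra verification via the telescoping identity that $f$ lands in $\Ff$ is a detail the paper leaves implicit even in Lemma \ref{lem:ngconst}, so you have simply made the shared argument more explicit.
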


To estimate the inner sum of
\begin{equation}\label{eq:twinsquant}
 \sum_{g \in \T} N_g = \sum_{f \in \Ff} \sum_{\substack{b \in \F_q\\ f(t+b,t)\text{ and } f(t+b,t) + A(t) \text{ both irreducible}}}1, \end{equation}
we observe that a calculation almost identical to the one appearing in \cite[\S3]{bender08A} shows that conditions (1)--(6) above are satisfied for all $f \in \Ff$ with $\ll n \cdot 2^{\binom{n+2}{2}} q^{I-1}$ exceptions. Let $\Ff'$ be the set of nonexceptional $f \in \Ff$. Then for each $f \in \Ff'$ both the following statements hold.
\begin{enumerate}
\item Let $K_1$ be the splitting field of $f(t+u,t)$ over $\F_q(u)$ and let $K_2$ be the splitting field of $f(t+u,t)+ A(t)$ over $\F_q(u)$. Then $\Gal(K_1/\F_q(u))$ and $\Gal(K_2/\F_q(u))$ are both the full symmetric group on $n$ letters.
\item Let $L$ be the compositum of $K_1$ and $K_2$. Then $L/\F_q(u)$ is a geometric Galois extension and the map \begin{align*}
\Gal(L/\F_q(u)) &\cong \Gal(K_1/\F_q(u)) \times \Gal(K_2/\F_q(u)), \\
\sigma &\mapsto (\sigma|_{K_1}, \sigma|_{K_2})\notag
\end{align*}
is an isomorphism.
\end{enumerate}
Proceeding as in the proof of Theorem \ref{thm:goldbach}, we obtain from Lemma \ref{lem:cheb} that for each $f \in \Ff'$ the inner sum in \eqref{eq:twinsquant} is
\begin{equation}\label{eq:innersumtwin}
 \frac{q}{n^2} + O\left(\frac{1}{n^2}(gq^{1/2} + g + N)\right) + O(n^2). \end{equation}
Here $N = [L:\F_q(u)] = n!^2$ and $g$ is the genus of $L$ over $\F_q$. By formula (4) of \cite{BW05} and the well-known formula for the genus of an irreducible smooth plane algebraic curve, we find after a short calculation that \[ g = 1 + N(n^2-n-1).\]
This shows that the $O$-terms in \eqref{eq:innersumtwin} are $O(Nq^{1/2})$. Now following the proof in \S\ref{sec:goldbach}, we find that the double sum on the right-hand side of \eqref{eq:twinsquant} is
\[ \frac{q^{I+1}}{n^2} + O\left(N q^{I+1/2}\right) + O\left(n \cdot 2^{\binom{n+2}{2}} q^{I}\right).\]
But from Lemma \ref{lem:ngconst2}, this double sum is precisely $q^{I+1-n} \#\T$. Theorem \ref{thm:twin2} follows.

\newpage
\bibliographystyle{amsplain}
\bibliography{twins}
\end{document}